\newtheorem{dummy}{Definition}[section]
\newtheorem{lemma}[dummy]{Lemma}
\newtheorem{proposition}[dummy]{Proposition}
\newenvironment{definition}{\begin{dummy}\rm}{\end{dummy}}
\newenvironment{proof}{\addvspace{\bigskipamount}\noindent{\bf Proof\ }}{\par\addvspace{\bigskipamount}}
\newenvironment{remark}{\addvspace{\bigskipamount}\noindent{\bf Remark\ }}{\par\addvspace{\bigskipamount}}
\newenvironment{remarks}{\addvspace{\bigskipamount}\noindent{\bf Remarks\ }}{\par\addvspace{\bigskipamount}}
\newenvironment{theorem}{\addvspace{\bigskipamount}\noindent{\bf Theorem\ }}{\par\addvspace{\bigskipamount}}
\newcommand{\D}{\mathbb{D}}
\newcommand{\N}{\mathbb{N}}
\renewcommand{\S}{\mathbb{S}}
\newcommand{\R}{\mathbb{R}}
\newcommand{\Z}{\mathbb{Z}}
\newcommand{\card}{\mathop{\rm card}\nolimits}
\newcommand{\cl}{\mathop{\rm cl}\nolimits}
\renewcommand{\dim}{\mathop{\rm dim}\nolimits}
\newcommand{\grad}{\mathop{\rm grad}\nolimits}
\newcommand{\im}{\mathop{\rm im}\nolimits}
\renewcommand{\int}{\mathop{\rm int}\nolimits}
\newcommand{\lk}{\mathop{\rm lk}\nolimits}
\renewcommand{\max}{\mathop{\rm max}\nolimits}
\renewcommand{\min}{\mathop{\rm min}\nolimits}
\begin{document}

\title{Another way of answering\\Henri Poincar\'es fundamental question}
\author{Peter Mani-Levitska\\
\\
{\normalsize Riedstrasse~36}\\
{\normalsize 3626 H\"unibach/BE}\\
{\normalsize Switzerland}}
\date{June 10, 2009}
\maketitle

\bigskip

\begin{abstract}
After G.~Perelman's solution of the Poincar\'e Conjecture, this is a different way toward it. Given a simply connected, closed 3-manifold $M$, we produce a homotopy disc $H$, which arises from $M$ by a finite sequence of simple modifications and, almost miraculously, can be imbedded into the ordinary space $\R^{3}$. It follows that $H$ is a disc, hence $M$ is a sphere. In order to construct $H$, we use a special stratification of $M$, based on the fact that $M$ is simply connected.
\end{abstract}

\bigskip

\section{Introduction}

I have been reluctant, after G.~Perelman's work, to present this second way of answering Henri Poincar\'e's question about 3-dimensional homotopy spheres, \cite{milnor03}, \cite{perelman02}, \cite{perelman03g}, \cite{perelman03c}. It probably originates in my misunderstanding of an idea described by S.~Smale in \cite{smale90}: ``First triangulate the 3-manifold, and remove one 3-dimensional simplex. It is sufficient to show the remaining manifold is homeomorphic to a 3-simplex. Then remove one 3-simplex at a time. This process doesn't change the homeomorphism type, and finally one is left with a single 3-simplex.''

I thought that S.~Smale was trying to construct a shelling. Since I was aware of non-shellable spheres \cite{DK78}, \cite{gruenbaum67} \cite{kuperberg01}, \cite{ziegler95}, it seemed natural to relax the conditions a little: replace triangulations by more general cell decompositions, allow to remove more than one 3-cell, each time, and also allow to add certain collections of 3-cells. These combinatorial homotopy operations, together with subdivisions, can be used to transform any 3-dimensional homotopy sphere into some homotopy disc $D$ with a wonderful property: it can be imbedded into the ordinary space $\R^{3}$. Now the generalized Sch\"onflies theorem allows us to conclude that $D$ is homeomorphic to the standard 3-disc, and we have arrived.

I sincerely thank my wife Anna. She has always supported my work, even when it looked like nothing. She also opened me the door to the miraculous world of melody, harmony and rhythm. I also thank Stephan Fischli who is patiently teaching me the art of writing.

This essay is devoted to the meditation of God's Word among the inhabitants of the earth.

\section{Basic notions}

We shall quote freely from \cite{BC70}, \cite{BJ73}, \cite{gruenbaum67}, \cite{milnor63}, \cite{munkres63}, \cite{munkres99}, \cite{spanier66} and \cite{ziegler95}. Let us elaborate a little on two areas.

\subsection{Differential geometry}

Many of our arguments rest on the fact \cite{bing83} that every compact 3-manifold has a smooth atlas, where ``smooth'' here means ``of class $C^1$''. A smooth manifold is called an $n$-sphere, if it is diffeomorphic to the standard sphere $\S^n=\{x\in\R^{n+1}:\,|x|=1\}$, and an $n$-disc, if it is diffeomorphic to $\D^n=\{x \in\R^n:\,|x|\le 1\}$.

If $M$ and $N$ are smooth manifolds, then $f:X\rightarrow Y$, with $X\subset M$, $Y\subset N$, is a smooth imbedding, if there exist open sets $U\subset M$, $V\subset N$, as well as a diffeomorphim $g:U\rightarrow V$, such that $X\subset U$, $Y\subset V$ and $g|X=f$.

We write $\dim M$, for the dimension of a manifold $M$, $\partial M$ for its boundary and $\int M$ for its interior. By a smooth deformation of $M$ we understand a smooth map $H:M\times [0,1]\rightarrow M$, such that $H_\tau:M\rightarrow M$ is a diffeomorphism, for every $\tau\in[0,1]$, with $H_0={\rm Id}_M$. Here we adopt the notation $H_\tau(x)=H(x,\tau)$.

A Morse function $\varphi:M\rightarrow\R$ is said to be climbing, of length $k$, if it has $k+1$ critical points $p_0,\ldots,p_k$, and if $\varphi(p_0)<\varphi(p_1)<\ldots<\varphi(p_k)$. $(p_0,\ldots,p_k)$ is its chain of critical points.

Whenever we deal with a chart $(U,h)$ of a manifold $M$, we assume that the environment $U$ is compact.

\subsection{Polytopes and cell complexes}

If $P$ is a (convex) polytope, we denote by ${\cal F}P$ the collection of all its faces, including $\emptyset$ and $P$, and by $\Delta^iP=\{X\in{\cal F}P:\ \dim X=i\}$ the set of its $i$-dimensional faces. The elements of $\Delta^0 P$ are the vertices, and those of $\Delta^1P$ are the edges of $P$. In the case $\dim P=3$, we often use the term ``face'' for a member of $\Delta^2P$.

We also need some notations from the $\Z_2$-homology of a simplicial complex $C$. Remember that the $\Z_2$-chains in $C$ can be identified with their supports, so that an $i$-chain in $C$ appears as the union of some set $A\subset\Delta^iC=\{X\in C:\ \dim X=i\}$. Compare \cite{SF34} for a careful description.

We often work in the piecewise linear category, where the terms ``space'' and ``homeomorphism'' mean ``polyhedron'' and ``piecewise linear homeomorphism'' \cite{RS72}. Sometimes, however, it is more convenient to consider the piecewise smooth category, whose maps can be decomposed into finitely many diffeomorphisms.

Whenever we deal with a closed interval $[a,b]\subset\R$, let us assume that $a<b$.

\section{Homotopy spheres, Morse functions, and stra\-tifications}

\begin{definition}\label{def:homotopy disc}
A {\em homotopy 3-sphere} is a closed, compact, simply connected 3-manifold. A {\em homotopy 3-disc} is a compact, simply connected 3-manifold $M$, whose boundary $\partial M$ is a sphere.
\end{definition}

\begin{definition}\label{def:stratification}
By a {\em stratification} of some 3-manifold $M$ we understand a sequence of submanifolds $S_1,\ldots,S_r$, $r\in\N$, with $M=S_1\cup\ldots\cup S_r$, such that there exist homeomorphisms $\ell_i:S_i\rightarrow F_i\times[0,1]$, $1\le i\le r$, where the $F_i$ are orientable 2-manifolds, and where the equations $S_i\cap S_{i+1}=U(S_i,\ell_i)\cap L(S_{i+1},\ell_{i+1})$ and $S_i\cap S_j=\emptyset$, in the case $|i-j|\ge 2$, always hold.

Here, if $\ell:A\rightarrow B\times[0,1]$ is a homeomorphism, we write $U(A,\ell)=\ell^{-1}(B\times\{1\})$ and $L(A,\ell)=\ell^{-1}(B\times\{0\})$ for the upper and lower boundaries of $A$, with respect to $\ell$. The spaces $S_i$ are called the {\em strata} in $M$.
\end{definition}

\begin{remarks}
\begin{itemize}
\item[(1)]
If the manifolds $F_i$ are all contained in the plane $\R^2$, then $S_1,\ldots,S_r$ is called a {\em planar stratification} of $M$.
\item[(2)]
Given two stratifications $\rho=R_1,\ldots,R_t$, and $\sigma=S_1,\ldots,S_r$ of $M$, we say that $\sigma$ is a {\em refinement} of $\rho$, if every space $S_i$ is contained in some $R_j$.
\end{itemize}
\end{remarks}

\begin{definition}\label{def:canonical chart}
Consider a compact 3-manifold $M$, a Morse function $\varphi:M\rightarrow\R$ and a critical point $p$ of $\varphi$. Let $n$ be the index of $\varphi$ at $p$, and choose a number $\delta>0$. A chart $\alpha=(U,h)$ of $M$, with $V=h(U)\subset\R^3$, is called {\em canonical}, with respect to $M$, $\varphi$, $p$ and $\delta$, if it satisfies
\begin{itemize}
\item[(1)]
$p\in\int U$, $h(p)=0$
\item[(2)]
the map $\psi:V\rightarrow\R$, given by $\psi(x)=\frac{1}{\delta}(\varphi\circ h^{-1}(x)-\varphi(p))$, has the form $\psi(x)=\sum_{i=1}^3\varepsilon_i x_i^2$, with $\varepsilon_i=1$ for $i\le 3-n$ and $\varepsilon_i=-1$, otherwise.
\item[(3)]
$V$ is the standard disc $\D^3$, if $n\in\{0,3\}$.
\item[(4)]
In the case $n=1$, $V$ is the closure of the bounded component of $\R^3\setminus S$, where we now proceed to describe the 2-sphere $S$.
\end{itemize}
Consider the 2-discs $H^+=\{x\in\R^3:\ \psi(x)=-1,\ 0\le x_3\le 2\}$ and $H^-=\{x\in\R^3:\ \psi(x)=-1,\ -2\le x_3\le 0\}$, and note that $L=\{x\in\R^3:\ \psi(x)=1\}$ is a cylindrical surface. Next, let us associate to each $\varepsilon\in\{+,-\}$ the ring $R^\varepsilon=\{x\in\R^3:$ there exist a number $\mu>0$ and a solution $c:[0,\mu]\to\R^3$ of the differential equation $y'=(\grad\psi)(y)$ with $c(0)\in\partial H^\varepsilon$, $c(\mu)\in L$, and $x\in\im(c)\}$, and denote by $Q$ the closure of the bounded component of $L\setminus (R^+\cup R^-)$. Now the sphere $S$, mentioned above, is the space $S=H^+\cup H^-\cup R^+\cup R^-\cup Q$.

\begin{figure}[ht]
\centering
\begin{overpic}{Figure}
\put(42,58){$H^+$}
\put(42,32){$H^-$}
\put(88,92){$R^+$}
\put(88,-2){$R^-$}
\put(75,45){$Q$}
\end{overpic}
\end{figure}

It follows from the generalized Sch\"onflies theorem \cite{bing83},\cite{brown60} that $V$ is a 3-disc, but in view of the rotational symmetry of $S$, there also exists an elementary argument. Let us say that $R^+$ and $R^-$ are the top and the bottom ring of $V$, $H^+$ and $H^-$ are the top and the bottom bay, while $Q$ is the cloak.
\begin{itemize}
\item[(5)]
In the case $n=2$, $V$ arises from the sphere described below (4) by a rotation around the $y$-axis, with an angle of $\frac{\pi}{2}$. The subsets of $\partial V$, corresponding to $R^+$, $R^-$, $H^+$, $H^-$ and $Q$ are now called the right and left ring, the right and left bay, and the cloak.
\end{itemize}
\end{definition}

\begin{proposition}\label{prop:stratification}
Let $M$ be a compact 3-manifold, $\varphi:M\rightarrow\R$ a climbing Morse function, and $(p_0,\ldots,p_k)$ its chain of critical points. Consider a sequence of positive numbers $(\delta_0,\ldots,\delta_k)$, which satisfy $\varphi(p_i)+\delta_i<\varphi(p_{i+1})-\delta_{i+1}$, always, and choose canonical charts $\alpha_i=(U_i,h_i)$ with respect to $M$, $\varphi$, $p_i$ and $\delta_i$, $0\le i\le k$. There exists a stratification of the manifold $N=\cl(M\setminus(U_0\cup\ldots\cup U_k))$.
\end{proposition}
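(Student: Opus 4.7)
The plan is to stratify $N$ by slicing it with level sets of $\varphi$ at the heights bounding the canonical zones. Set $a_i=\varphi(p_i)-\delta_i$ and $b_i=\varphi(p_i)+\delta_i$ for each applicable $i$, and list the resulting heights in increasing order as $c_1<c_2<\cdots<c_m$; the hypothesis $\varphi(p_i)+\delta_i<\varphi(p_{i+1})-\delta_{i+1}$ makes the $c_j$ strictly increasing and keeps distinct canonical zones disjoint in height. The candidate strata are $S_j=\varphi^{-1}([c_j,c_{j+1}])\cap N$, and the aim is to show each is homeomorphic to $F_j\times[0,1]$ with $F_j=\varphi^{-1}(c_j)\cap N$; orientability of $F_j$ follows from its co-orientation in $M$ by $\grad\varphi$.

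For a \emph{regular slab} $[c_j,c_{j+1}]=[b_i,a_{i+1}]$ lying between two canonical zones, the open interval $(c_j,c_{j+1})$ contains no critical value of $\varphi$ and meets no $U_k$, so $S_j=\varphi^{-1}([b_i,a_{i+1}])$ is a standard Morse cobordism and the flow of $\grad\varphi$ (under any compatible metric) produces the product homeomorphism in the familiar way.

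The heart of the proof is the \emph{critical slab} $[c_j,c_{j+1}]=[a_i,b_i]$ around a critical point $p_i$ of index $1$ or $2$, where $S_j=\varphi^{-1}([a_i,b_i])\setminus\int U_i$. The crucial topological observation is that the bottom face $\varphi^{-1}(a_i)$ with the two open bays $H_i^\pm$ deleted and the top face $\varphi^{-1}(b_i)$ with the open cloak $Q_i$ deleted are homeomorphic to a common orientable surface $F_j$: crossing the $1$-handle at $p_i$ replaces two disjoint discs by an annulus inside the level set, and excising either of these features from the corresponding level set yields the same underlying surface with boundary. To realise the product structure I would construct a nowhere-vanishing vector field $X$ on $S_j$ that agrees with $\grad\varphi$ outside a collar of $\partial U_i$, is tangent to $\partial U_i$ inside that collar, and there transports $\partial H_i^+\cup\partial H_i^-$ upward along the rings $R_i^+\cup R_i^-$ to $\partial Q_i$ (the ring-parallel flow being precisely the one used to define $R_i^\pm$ in Definition~\ref{def:canonical chart}); the integral curves of $X$ then provide the homeomorphism $S_j\cong F_j\times[0,1]$. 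The index-$2$ case is symmetric under the $\pi/2$-rotation of Definition~\ref{def:canonical chart}(5), and critical slabs around an index-$0$ or index-$3$ point reduce to spherical shells $S^2\times[0,1]$ by a simpler version of the same construction.

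The principal obstacle is the construction of $X$ in the critical case: one must interpolate smoothly between $\grad\varphi$ on the exterior of $U_i$ and the ring-parallel flow on $\partial U_i$ coming from the canonical chart, ensuring that the resulting field is nowhere zero and transverse to every intermediate level set $\varphi^{-1}(c)$ with $c\in(a_i,b_i)$. Once the $S_j$ are each expressed as products, the stratification conditions $S_j\cap S_{j+1}=\varphi^{-1}(c_{j+1})\cap N$ and $S_j\cap S_k=\emptyset$ for $|j-k|\ge 2$ are immediate.
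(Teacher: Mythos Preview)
Your plan coincides with the paper's: slice $N$ at the heights $\varphi(p_i)\pm\delta_i$ and trivialise each slab by a gradient-like flow tangent to $\partial U_i$ along the rings. The paper, however, dissolves your ``principal obstacle'' by interpolating Riemannian metrics rather than vector fields: it picks $g$ on $M$ so that near each $U_i$ it agrees with the pull-back of the Euclidean metric under $h_i$. Then $\grad_g\varphi$ near $U_i$ is literally $(h_i')^{-1}\bigl((\grad\psi)\circ h_i\bigr)$, hence already tangent to $h_i^{-1}(R^+\cup R^-)$; and transversality to every level $\varphi^{-1}(c)$ comes for free from $d\varphi(\grad_g\varphi)=\|d\varphi\|_g^2>0$. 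No separate patching of two flows is needed.

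One point to correct: the critical slab around an interior index-$0$ point $p_i$ is \emph{not} a spherical shell. Since $p_i$ is a local minimum, $U_i$ is an entire connected component of $\varphi^{-1}([a_i,b_i])$, so $\varphi^{-1}([a_i,b_i])\cap N$ consists of the remaining components (a genuine product via the regular-slab argument) together with the isolated $2$-sphere $\partial U_i$. That sphere cannot sit inside any $F_j\times[0,1]$ structure on this slab; instead it enters the stratification as a fresh component of $L(S_{j+1},\ell_{j+1})$ in the \emph{next} stratum. This is allowed because Definition~\ref{def:stratification} only demands $S_j\cap S_{j+1}=U(S_j,\ell_j)\cap L(S_{j+1},\ell_{j+1})$, not that the two boundary surfaces coincide. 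The index-$3$ case is dual. With this adjustment your outline matches the paper's proof.
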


\begin{proof}
Let us show, by induction on $i$ that
\begin{itemize}
\item[(1)]
there exists a stratification of $N_i=\{x\in N:\ \varphi(x)\le\varphi(p_i)-\delta_i\}$ for every $0\le i\le k$.
\end{itemize}
Note that $N_0=\emptyset$ and $N_k=N$. We remember a basic fact in differential geometry \cite{kosinski93}, \cite{milnor63}:
\begin{itemize}
\item[(2)]
Given a compact Riemann manifold $W$, a smooth function $f:W\rightarrow\R$ and real numbers $\rho<\sigma$, which satisfy $(\grad f)(x)\ne0$ for every $x\in f^{-1}[\rho,\sigma]$, we find a diffeomorphism $g:f^{-1}(\rho)\times[\rho,\sigma]\rightarrow f^{-1}[\rho,\sigma]$, such that $g(f^{-1}(\rho)\times\{\tau\})=f^{-1}(\tau)$, always.
\end{itemize}
Now, with $\rho=\max(\varphi(U_0))$ and $\sigma=\min(\varphi(U_1))$, we obtain a diffeomorphism $g:\varphi^{-1}(\rho)\times[\rho,\sigma]\rightarrow\varphi^{-1}[\rho,\sigma]$ such that $g(\varphi^{-1}(\rho)\times\{\tau\})=\varphi^{-1}(\tau)$, for every $\tau\in[\rho,\sigma]$. Associate to every $x\in\partial U_0$ the curve $Z(x)=g(\{x\}\times[\rho,\sigma])$ and observe, in view of (2), that $S_1=\bigcup\{Z(x):\ x\in\partial U_0\}$ is a stratification of $N_1$.

For the inductive step, let us assume that we have constructed a stratification $S_1,\ldots,S_r$ of $N_i$, for some integer $i\ge1$. We begin with the situation where
\begin{itemize}
\item[(3)]
the index $n$ of $\varphi$ at $p_i$ is 1.
\end{itemize}
We write $(U,h)=(U_i,h_i)$, and follow the notation of Definition~\ref{def:canonical chart}. Our construction of the canonical charts implies that the map $\psi$ satisfies $(\grad\psi)(x)\in T_x(\partial V)$, for every $x\in R^+\cup R^-$, where $T_x(\partial V)$ stands for the tangent space of $\partial V$ at $x$. Now we choose a Riemann metric $g$ on $M$, such that $g_x(u,v)=\langle h'(u),h'(v)\rangle$, everywhere in some neighbourhood of $U=h^{-1}(V)$.

The gradient of $\varphi$, with respect to $g$, fulfills the equation $(\grad\varphi)(x)=(h')^{-1}((\grad\psi)(h(x)))$, at every point $x$ in this neighbourhood, where $\grad\psi$ is taken with respect to the standard Euclidean metric. Consequently, $(\grad\varphi)$ is a smooth vector field on $W=\{x\in N:\ \mu\le\varphi(x)\le\nu\}$, and $(\grad\varphi)(x)$ lies in the tangent space $T_x(h^{-1}(R^+\cup R^-))$, whenever $x$ belongs to $\partial W\cap h^{-1}(R^+\cup R^-)$. Here, we have written $\mu=\min(\varphi(U))$ and $\nu=\max(\varphi(U))$.

Associate to every $x\in W$ with $\varphi(x)=\mu$ the trajectory $a_x:[0,\delta(x)]\rightarrow W$ with $a_x(0)=x$, where the number $\delta(x)$ is chosen such that $\varphi(a_x(\delta(x)))=\nu$. We write $S_{r+1}=\bigcup\{\im(a_x):\ x\in W,\ \varphi(x)=\mu\}$ and observe that $S_1,\ldots,S_r,S_{r+1}$ is a stratification of $\{x\in N:\ \varphi(x)\le\nu\}$.

By repeating the construction below (2) once more, we obtain a stratification of $N_{i+1}$. The inductive step is completed, under the assumption (3).
\begin{itemize}
\item[(4)]
Assume that the index of $\varphi$ at $p_i$ is 2.
\end{itemize}
We proceed as we did under (3), with one little difference: $N_i$ satisfies $h^{-1}(Q)\subset\partial N_i$ instead of $h^{-1}(H^+\cup H^-)\subset\partial N_i$.
\begin{itemize}
\item[(5)]
Assume that the index of $\varphi$ at $p_i$ is 0 or 3.
\end{itemize}
Here we can use a trivial argument, similar to the passage below (2). Proposition~\ref{prop:stratification} is established.
\end{proof}

\section{Splitting discs}

\begin{definition}\label{def:splitting disc}
Let $M$ be a homotopy 3-disc. By a {\em splitting disc} in $M$ we understand a 2-disc $D\subset M$, transverse to the boundary $\partial M$, and satisfying $\int D\subset\int M$, $\partial D\subset\partial M$.
\end{definition}

\begin{definition}\label{def:linking number}
Consider a homotopy 3-disc $M$ and two piecewise smooth im\-beddings $a:\S^1\rightarrow M$, $b:\S^1\rightarrow M$ such that $\im(a)\cap\im(b)=\emptyset$. Choose a triangulation $\tau=(C,h)$ of $M$, such that
\begin{itemize}
\item[(1)]
$h(A)=\im(a)$, $h(B)=\im(b)$, for two $\Z_2$ 1-cycles $A$, $B$ in the simplicial complex $C$
\item[(2)]
there exists a 2-chain $D$ in $C$, transverse to $B$, which satisfies $\partial D=A$.
\end{itemize}
The {\em linking number} $\lk(a,b)$ is the element of $\{0,1\}$ which differs from $\card(B\cap D)$ by an even integer.
\end{definition}

\begin{proposition}\label{prop:linking numbers}{\ }
\begin{itemize}
\item[(1)]
$\lk(a,b)$ does not depend on the choice of $\tau$ and $D$
\item[(2)]
$\lk(a,b)=\lk(b,a)$
\item[(3)]
if $b'$ is homotopic to $b$ in $M\setminus\im(a)$, then $\lk(a,b')=\lk(a,b)$
\end{itemize}
\end{proposition}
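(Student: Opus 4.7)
\quad The plan is to deduce all three statements from two homological facts about the homotopy 3-disc $M$ combined with the mod-2 intersection pairing on simplicial chains. Since $M$ is compact and simply connected with $\partial M\cong\S^2$, the Hurewicz theorem gives $H_1(M;\Z_2)=0$, so in any sufficiently fine triangulation the 1-cycles $A$ and $B$ bound 2-chains, guaranteeing the existence of the auxiliary chains $D_a,D_b$ with $\partial D_a=A$, $\partial D_b=B$. A short Poincar\'e--Lefschetz duality computation yields $H_2(M;\Z_2)\cong H^1(M,\partial M;\Z_2)=0$, using the long exact sequence of the pair $(M,\partial M)$ together with the isomorphism $H^0(M;\Z_2)\to H^0(\partial M;\Z_2)$ and $H^1(\S^2;\Z_2)=0$. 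The remaining ingredient is the mod-2 Leibniz rule for transverse simplicial chains $X,Y$ in the 3-manifold $M$: the intersection $X\cap Y$ is again a chain whose boundary equals $\partial X\cap Y+X\cap\partial Y$, so that $\card(\partial X\cap Y)\equiv\card(X\cap\partial Y)\pmod 2$ whenever both sides are 0-chains.

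For part (1), if $D$ and $D'$ are two 2-chains with $\partial D=\partial D'=A$, both transverse to $B$, then $D+D'$ is a 2-cycle, hence a boundary $\partial E$ by $H_2(M;\Z_2)=0$; Leibniz then gives $\card((D+D')\cap B)\equiv\card(E\cap\partial B)=0\pmod 2$ since $\partial B=0$. Independence of the triangulation $\tau$ follows by taking a common simplicial subdivision $\tau''$ of two given triangulations and observing that subdividing a transverse pair of chains neither creates nor destroys mod-2 crossings; this reduces the comparison to two 2-chains in $\tau''$ with the same boundary, to which the first half applies.

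For part (2), pick 2-chains $D_a,D_b$ with $\partial D_a=A$, $\partial D_b=B$, chosen in general position so that $K=D_a\cap D_b$ is a transverse 1-chain. Leibniz gives $\partial K=A\cap D_b+D_a\cap B$, and since $\partial K$ is a 0-boundary its cardinality is even, whence $\card(A\cap D_b)\equiv\card(D_a\cap B)\pmod 2$; the right side computes $\lk(a,b)$ and the left side $\lk(b,a)$. For part (3), a homotopy of $b$ to $b'$ in $M\setminus\im(a)$ is realised, after refining $\tau$, by a simplicial 2-chain $E$ disjoint from $A$ with $\partial E=B+B'$. Taking $D$ with $\partial D=A$ transverse to $E$, $B$ and $B'$, Leibniz yields $\card(D\cap B)+\card(D\cap B')=\card(D\cap\partial E)\equiv\card(\partial D\cap E)=\card(A\cap E)=0\pmod 2$, so $\lk(a,b)=\lk(a,b')$.

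The main technical obstacle will be the piecewise-linear general-position bookkeeping: for each of the three parts one must construct a simultaneous triangulation in which the relevant cycles, chains and homotopies are all simplicial and transverse, and verify that barycentric subdivision preserves mod-2 intersection counts. Once this standard PL apparatus (in the spirit of \cite{RS72} and \cite{SF34}) is in place, the three assertions reduce cleanly to $H_2(M;\Z_2)=0$ and the Leibniz identity above.
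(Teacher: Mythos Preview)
Your argument is correct: the vanishing of $H_1(M;\Z_2)$ and $H_2(M;\Z_2)$ for a homotopy 3-disc, together with the mod-2 Leibniz rule for transverse simplicial chains, yields all three assertions cleanly, and your bookkeeping with common subdivisions and general position is the standard way to make this rigorous.

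As for comparison with the paper: there is essentially nothing to compare. The paper does not prove this proposition; it only remarks that \cite{bing83} contains a proof in the case $M=\R^3$ and asserts that this proof ``can easily be adapted to the present situation.'' Your write-up \emph{is} that adaptation. In $\R^3$ one has $H_1=H_2=0$ trivially; the only thing to check when passing to a general homotopy 3-disc is that these groups still vanish, which you do via Hurewicz and Poincar\'e--Lefschetz duality. So your approach is exactly what the paper has in mind, but you have supplied the details the paper omits.
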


\begin{remark}
Compare \cite{bing83} and \cite{RS72}. The book \cite{bing83} contains a proof of Proposition~\ref{prop:linking numbers} in the case $M=\R^3$. This proof can easily be adapted to the present situation.
\end{remark}

\begin{proposition}\label{prop:submanifolds}
Consider a homotopy 3-disc $M$ and a splitting disc $D\subset M$. There exists a unique pair $U$ and $V\ne U$ of submanifolds in $M$, such that
\begin{itemize}
\item[(1)]
$U\cup V=M$, $U\cap V=D$
\item[(2)]
$U$ and $V$ are 3-manifolds, bounded by a 2-sphere.
\end{itemize}
\end{proposition}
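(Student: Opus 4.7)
The plan is to identify $U$ and $V$ with the closures of the two connected components of $M \setminus D$; once this is done the geometric statements fall out almost automatically. Since $M$ is a homotopy 3-disc, $\partial M$ is a 2-sphere, and the circle $\partial D \subset \partial M$ splits it, by the classical Jordan--Sch\"onflies theorem on $\partial M$, into two 2-discs $E_1, E_2$ with $E_1 \cup E_2 = \partial M$ and $E_1 \cap E_2 = \partial D$. The crucial step, and the main obstacle, is to verify that $M \setminus D$ really has exactly two components; this is where the hypothesis that $M$ is simply connected enters.

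To see that $M \setminus D$ has at most two components, note that $M$ is orientable (every simply connected manifold is), so the properly embedded 2-disc $D$ admits a bicollar $N \cong D \times [-1,1]$ in $M$, and $N \setminus D$ has exactly two connected components. To see that $M \setminus D$ has at least two, fix a point $p \in \int D$ and a short transverse arc $\gamma$ through $p$, with endpoints $q^+, q^-$ lying in $M \setminus D$. Suppose, for contradiction, that $q^+$ can be joined to $q^-$ by a path $\beta \subset M \setminus D$. The loop $\alpha = \gamma \cup \beta$ then meets $D$ transversely in the single point $p$. Since $M$ is simply connected, $\alpha$ extends to a piecewise smooth map $f : \D^2 \to M$, which we may, after a small perturbation, assume to be transverse to $D$. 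Then $f^{-1}(D)$ is a compact 1-submanifold of $\D^2$ whose boundary, lying in $\S^1 = \partial\D^2$, is the single-point set $\{p\}$. This contradicts the fact that the boundary of a compact 1-manifold has an even number of points.

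Let $U', V'$ be the two components of $M \setminus D$, and set $U = \cl(U')$, $V = \cl(V')$. Then $U \cup V = M$ and $U \cap V = D$. Each of $U, V$ is a 3-manifold, being the closure of an open submanifold of $M$ flanked by a two-sided properly embedded disc. Exactly one of the open discs $\int E_1, \int E_2$ lies in each of $U', V'$; say $\int E_1 \subset U'$ and $\int E_2 \subset V'$. Then $\partial U = D \cup E_1$ consists of two 2-discs identified along the common boundary circle $\partial D$, hence is a 2-sphere, and similarly $\partial V = D \cup E_2$ is a 2-sphere. For uniqueness, if $(\tilde U, \tilde V)$ is another such pair, then $\tilde U \setminus D$ and $\tilde V \setminus D$ form a partition of $M \setminus D$ into two non-empty disjoint open subsets; since $M \setminus D$ has exactly the two connected components $U'$ and $V'$, this partition must coincide with $\{U', V'\}$, and taking closures recovers the pair $\{U, V\}$.
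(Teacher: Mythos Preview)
Your argument is correct and follows the same overall strategy as the paper: both proofs reduce everything to showing that $M\setminus D$ has exactly two connected components, and both obtain the crucial ``at least two'' part from a mod-$2$ intersection count that is forced to vanish by simple connectivity. The difference is only in packaging. The paper invokes its $\Z_2$ linking number (Definition~\ref{def:linking number}) and its homotopy invariance (Proposition~\ref{prop:linking numbers}): the circle $B$ built from the short transverse arc and the path $\beta$ satisfies $\lk(B,\partial D)=1$ because $B\cap D$ is a single point, yet $B$ is null-homotopic in $M\setminus\partial D$, forcing $\lk(B,\partial D)=0$. You instead run the transversality argument directly on a null-homotopy $f:\D^2\to M$ of the same loop, reaching the same parity contradiction without the linking-number formalism; your route is more self-contained, while the paper's reuses machinery it had already set up. One small technical point worth tightening: since $D$ has boundary $\partial D\subset\partial M$, you should first push $\beta$, and hence the loop $\alpha$ and the null-homotopy $f$, into $\int M$, where $\int D$ is a \emph{closed} submanifold; then transversality of $f$ to $\int D$ is unproblematic and $f^{-1}(\int D)$ is genuinely a compact $1$-manifold with boundary in $\partial\D^2$. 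Finally, you spell out the Jordan--Sch\"onflies step on $\partial M$, the identification $\partial U=D\cup E_1\cong\S^2$, and the uniqueness argument, all of which the paper dismisses with ``obviously''.
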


\begin{proof}
After a homeomorphism we may assume that there exists a simplicial complex $C$ with $|C|=M$, and a subcomplex $C'$ of $C$, which satisfies $|C'|=D$. Denote by $U_1,\ldots,U_n$ the components of $M\setminus D$. Obviously, we have $n\le2$, so it remains to show that
\begin{itemize}
\item[(1)]
$n=2$.
\end{itemize}
Consider a triangle $X\in\Delta^2C'$ and a point $p\in\int X$. Let $Y_1$ and $Y_2\ne Y_1$ be the tetrahedra in $C$ which contain $X$, and choose points $q_i\in\int Y_i$. If we write $U_i$ for the component of $M\setminus D$ with $q_i\in U_i$, our claim (1) is equivalent to
\begin{itemize}
\item[(2)]
$U_1\ne U_2$.
\end{itemize}
Assuming otherwise, we write $U_1=U_2=U$, and find a piecewise linear path $\beta:[0,1]\rightarrow U\cap\int M$, which satisfies $\beta(0)=q_1$, $\beta(1)=q_2$. We can assume that $B=\im(\beta)\cup[p,q_1]\cup[p,q_2]$ is a circle in $M$. Since $B\cap D=\{p\}$, it follows that $\lk(B,\partial D)=1$. But $M$ is a homotopy disc, therefore $B$ is homotopic to any point $q\in M\setminus D$, hence Proposition~\ref{prop:linking numbers} tells us that $\lk(B,\partial D)=0$. We have reached a contradiction, and (2) follows.

Obviously, $U_1$ and $U_2$ are 3-manifolds, bounded by a 2-sphere, and Proposition~\ref{prop:submanifolds} is established.
\end{proof}

\pagebreak

\section{Imbedding into the 3-dimensional Euclidean space}

\begin{definition}\label{def:homotopy operation}
Consider a 3-manifold $M$ with non-empty boundary and a 3-disc $D\subset M$ such that $\partial D\cap\partial M$ is a 2-disc. We say that $V=\cl(M\setminus D)$ arises from $M$ by a {\em reduction}, and that $M$ arises from $V$ by an {\em extension}. A manifold $N$ arises from $M$ by a {\em homotopy operation}, if there exists a finite sequence $M=M_1,M_2,\ldots,M_r=N$ such that $M_{i+1}$ arises from $M_i$ either by a reduction or an extension, for every $i\in\{1,2,\ldots,r-1\}$.
\end{definition}

\begin{proposition}\label{prop:homotopy operation}{\ }
\begin{itemize}
\item[(1)]
If $M$ is a homotopy 3-disc, and if $N$ arises from $M$ by a homotopy operation, then $N$ is also a homotopy disc.
\item[(2)]
If $M$ is actually a 3-disc, then so is $N$.
\end{itemize}
\end{proposition}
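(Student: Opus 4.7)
My plan is to induct on the length $r$ of the defining sequence $M = M_1,\ldots,M_r = N$, reducing the assertions to the effect of a single reduction or a single extension. Moreover, by the very definition of the two operations, ``$V$ arises from $M$ by reduction'' is the same relation as ``$M$ arises from $V$ by extension.'' Both cases are therefore captured by a single symmetric picture $M = V\cup D$ with $V = \cl(M\setminus D)$, in which I would relate the properties of $M$ and those of $V$.

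Set $E = \partial D\cap\partial M$ and $F = \cl(\partial D\setminus E)$. Since $\partial D$ is a 2-sphere and $E$ is a 2-disc on it, $F$ is a 2-disc with $\partial F = \partial E$. A local check at points of $\int D$, $\int E$, $\int F$ and $\partial F$ establishes that $V\cap D = F$ and $\partial V = F\cup\cl(\partial M\setminus E)$. Reading the last equality as a gluing of two pieces along the circle $\partial E$, we see that $\partial M$ is a 2-sphere if and only if $\partial V$ is, since both are obtained from the same piece $\cl(\partial M\setminus E)$ by capping with a 2-disc. For the fundamental group I would thicken $V$ and $D$ inside $M$ to open neighbourhoods whose intersection deformation retracts to $F$, and then invoke van Kampen's theorem; since $D$ and $F$ are contractible, this yields $\pi_1(M)\cong\pi_1(V)$. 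Assertion~(1) follows at once.

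For assertion~(2) the boundary and $\pi_1$ analyses above still apply, so only the PL (equivalently smooth) classification remains to be checked. The key classical input I would invoke is that the union of two PL 3-discs along a PL 2-disc in the boundary of each is a PL 3-disc; this follows from the uniqueness of collars and the standardness of the PL 3-ball, both available in \cite{RS72}. In the extension direction, where $V$ is a 3-disc, the statement is then immediate. For reduction, with $M$ a 3-disc and $D\subset M$ a 3-disc, I would use a collar of $E$ in $\partial M$ to isotope $D$ to a standard half-ball of $M$, after which the complement $V$ is visibly a 3-disc.

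The main obstacle will be making this last standardization argument rigorous; it rests on the uniqueness of regular neighbourhoods of $E$ in $\partial M$ and of PL 3-balls sitting inside PL 3-balls. Each ingredient is classical, so the real work is only to quote and assemble them; no genuinely new topology is required beyond the van Kampen computation already carried out for~(1).
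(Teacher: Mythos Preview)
Your argument is correct, but the paper takes a far shorter and more unified route: it simply cites Lemma~3.25 of \cite{RS72} to conclude that $M$ and $N$ are actually \emph{homeomorphic}, so that both (1) and (2) follow at once. Your approach---verifying the boundary-sphere condition and $\pi_1$ by hand via van~Kampen for (1), and then assembling collar uniqueness and regular-neighbourhood uniqueness for (2)---amounts to a partial reconstruction of that lemma; indeed, your ``isotope $D$ to a standard half-ball'' step in the reduction case is precisely the content of such a result in \cite{RS72}. The gain of your method is that it is more self-contained and makes the mechanism visible; the cost is that it establishes a weaker conclusion (you never directly obtain $M\cong N$) with considerably more effort than the paper's one-line citation.
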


\begin{proof}
Lemma~3.25 in \cite{RS72} confirms that $M$ and $N$ are homeomorphic.
\end{proof}

\begin{proposition}\label{prop:sphere}
Consider 3-discs $A,B$ which satisfy $\partial A=\partial B=A\cap B$. The space $A\cup B$ is a 3-sphere.
\end{proposition}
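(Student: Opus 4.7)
The plan is to exhibit a homeomorphism $F:A\cup B\to\S^3$ by writing the standard 3-sphere as $\S^3=\D_+\cup\D_-$, the union of its closed northern and southern hemispheres, which meet along the equatorial 2-sphere $E$ and are themselves 3-discs. Since $A$ and $B$ are 3-discs I may fix diffeomorphisms $f_A:A\to\D_+$ and $f_B:B\to\D_-$, whose restrictions are homeomorphisms $f_A|_S,f_B|_S:S\to E$, where $S=\partial A=\partial B=A\cap B$.

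The two boundary homeomorphisms need not coincide, so the first real task is to reconcile them. Consider the self-homeomorphism $\phi=(f_B|_S)\circ(f_A|_S)^{-1}:E\to E$ and extend it radially (Alexander's trick) to a homeomorphism $\tilde\phi:\D_-\to\D_-$ by coning from the centre; continuity at the cone point is automatic because $\phi$ is bounded. Replacing $f_B$ by $\tilde\phi^{-1}\circ f_B$ yields a homeomorphism $B\to\D_-$ whose restriction to $S$ now agrees with $f_A|_S$. The pasting lemma then glues $f_A$ and the modified $f_B$ into a global continuous bijection $F:A\cup B\to\S^3$ whose inverse is continuous by the same pasting argument, so $F$ is a homeomorphism and $A\cup B$ is at least topologically a 3-sphere.

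The delicate point, and the main obstacle, is that ``sphere'' in this paper means \emph{diffeomorphic} to the standard $\S^3$, whereas Alexander's trick is purely topological and the resulting $F$ will not be smooth along $S$. I would settle this by choosing, via the collar neighbourhood theorem, matching collars of $\partial A$ in $A$ and of $\partial B$ in $B$, performing the matching of boundary diffeomorphisms inside these collars, and using a smooth bump in the collar parameter to interpolate between $f_A$ and the adjusted $f_B$. In dimension~3 the smooth, piecewise linear and topological categories coincide (as cited through \cite{bing83} in the paper), so this smoothing produces a genuine diffeomorphism $A\cup B\to\S^3$, completing the proof.
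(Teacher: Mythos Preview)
Your argument is correct; the paper, by contrast, gives no argument at all and simply refers the reader to statement 2.B of chapter~IV in \cite{bing83}. So your route is genuinely different in that you actually prove the result rather than cite it.

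A brief comparison. Your core idea---map $A$ and $B$ separately to the two hemispheres of $\S^3$, then use Alexander's coning trick to make the boundary maps agree and glue---is the standard doubling argument and works cleanly in the topological category. Your final step, invoking the coincidence of the topological, PL and smooth categories in dimension~3 (Moise, \cite{bing83}), is the right way to upgrade the resulting homeomorphism to a diffeomorphism and is entirely in the spirit of how the paper handles category issues elsewhere. The paper's citation to Bing presumably covers exactly this classical fact, so in substance the two routes converge.

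One small remark: your intermediate paragraph about matching the boundary diffeomorphisms using collars and a bump function is, as written, not a complete argument. To interpolate smoothly between $f_A|_S$ and $f_B|_S$ you would need to know that these two diffeomorphisms of $\S^2$ are smoothly isotopic, which is Smale's theorem $\pi_0\,\mathrm{Diff}^+(\S^2)=0$ (equivalently $\Gamma_3=0$); it does not follow from the collar neighbourhood theorem alone. Since your last sentence already settles the smoothness question via the equivalence of categories, you could simply drop the collar paragraph, or replace it by an explicit appeal to Smale's result.
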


\begin{proof}
Compare the statement 2.B of chapter IV in \cite{bing83}.
\end{proof}

\begin{definition}\label{def:true companion}
Let $M$ be a homotopy 3-sphere, $\varphi:M\rightarrow\R$ a climbing Morse function, $(p_0,\ldots,p_k)$ its chain of critical points, $(\delta_0,\ldots,\delta_k)$ a sequence of positive numbers, which satisfy $\varphi(p_i)+\delta_i<\varphi(p_{i+1})-\delta_{i+1}$, always. We choose canonical charts $\alpha_i=(U_i,h_i)$ with respect to $M$, $\varphi$, $p_i$ and $\delta_i$, $0\le i\le k$. A homotopy 3-disc $H$, which arises from $M\setminus\int U_0$ by a sequence of homotopy operations, is called a {\em true companion} of $M$, if
\begin{itemize}
\item[(1)]
there exists a stratification of $H$ and
\item[(2)]
$H\cap\int U_i=\emptyset$, for every $i\in\{1,\ldots,k\}$.
\end{itemize}
\end{definition}

Remember Definition~\ref{def:stratification}.

\begin{proposition}\label{prop:true companion}
There exists a true companion of $M$, in the sense of Definition~\ref{def:true companion}.
\end{proposition}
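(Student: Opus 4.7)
The plan is to start from $X_0 = M \setminus \int U_0$ and build $H$ by a sequence of reductions, each one stripping off one of the balls $U_i$ for $i \ge 1$ together with a thin connecting tube. Since $p_0$ is the global minimum of $\varphi$, its index is $0$ and $U_0$ is a standard 3-disc; hence $X_0$ is simply connected, compact, and bounded by the 2-sphere $\partial U_0$, so $X_0$ is a homotopy 3-disc in the sense of Definition~\ref{def:homotopy disc}.

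First I would apply Proposition~\ref{prop:stratification} to obtain a stratification $S_1,\ldots,S_r$ of $N = \cl(M \setminus (U_0 \cup \cdots \cup U_k))$, with product homeomorphisms $\ell_j : S_j \to F_j \times [0,1]$; the boundary $\partial N$ is the disjoint union of the 2-spheres $\partial U_0, \partial U_1, \ldots, \partial U_k$. For each $i = 1, \ldots, k$ I would then construct a 3-disc tube $T_i \subset N$ joining a 2-disc on $\partial U_0$ to a 2-disc on $\partial U_i$, chosen to be \emph{vertical} with respect to the stratification: whenever $T_i$ crosses a stratum $S_j$, the image $\ell_j(T_i \cap S_j)$ should be of the form $D_{j,i} \times [0,1]$ for some 2-disc $D_{j,i} \subset F_j$, with $D_{j,i}$ and $D_{j+1,i}$ matching along the interface $\ell_j^{-1}(F_j \times \{1\}) = \ell_{j+1}^{-1}(F_{j+1} \times \{0\})$. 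The tubes are to be chosen pairwise disjoint and attached to $\partial U_0$ along pairwise disjoint 2-discs.

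Given these tubes, I would perform the reductions $X_i = \cl(X_{i-1} \setminus (U_i \cup T_i))$ for $i = 1, \ldots, k$. Each $U_i \cup T_i$ is a 3-disc, being two 3-discs glued along the 2-disc $U_i \cap T_i$, and $\partial(U_i \cup T_i) \cap \partial X_{i-1}$ is the attaching disc $T_i \cap \partial U_0$, a single 2-disc that still lies on $\partial X_{i-1}$ because the attaching discs on $\partial U_0$ were chosen disjoint. Hence each step is a legitimate reduction per Definition~\ref{def:homotopy operation}, and Proposition~\ref{prop:homotopy operation} then guarantees that $H = X_k = N \setminus \bigcup_{i=1}^k \int T_i$ is a homotopy 3-disc that misses every $\int U_i$ with $i \ge 1$. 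The verticality of the tubes forces each restricted piece $S_j' = \ell_j^{-1}\bigl((F_j \setminus \bigcup_i \int D_{j,i}) \times [0,1]\bigr)$ to be a product of an orientable 2-manifold with $[0,1]$, and the matching condition on the $D_{j,i}$ propagates the interface equations, so $S_1',\ldots,S_r'$ stratifies $H$.

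The hard part will be the coherent construction of the tubes: the discs $D_{j,i}$ in consecutive strata must agree along inter-stratum surfaces; each tube must reach the correct sphere $\partial U_i$, which for index-$1$ or index-$2$ critical points is partly interior to $N$ and partly wedged between two strata along the rings $R^\pm$; and all $k$ tubes must remain pairwise disjoint throughout. The rigidity of the canonical charts of Definition~\ref{def:canonical chart} near each $p_i$, together with the product structure $F_j \times [0,1]$ of each stratum, should make this a routine if tedious piecewise-smooth construction, but it is where all the bookkeeping lives.
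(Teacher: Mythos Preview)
Your proposal is correct and follows essentially the same line as the paper's own argument: build tubes $T_i\cong\D^2\times[0,1]$ inside $N$ from $\partial U_0$ to each $\partial U_i$, compatible with the stratification, and set $H=\cl\bigl(M\setminus(U_0\cup T_1\cup\cdots\cup T_k)\bigr)$, reached from $M\setminus\int U_0$ by successive reductions along the 3-discs $U_i\cup T_i$. The paper organizes the tube construction through a cell decomposition $C$ of $N$ that \emph{respects} the stratification (each 3-cell mapping to some $P\times[0,1]$ inside a single stratum), then refines twice: once to thread pairwise disjoint edge-paths $P_i$ from $\partial U_0$ to $\partial U_i$ through the 1-skeleton, and again to thicken these paths to disjoint subcomplexes $Q_i\cong\D^2\times[0,1]$. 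Your ``verticality'' condition on $T_i\cap S_j$ is precisely what such a respecting cell decomposition buys, and it is exactly what is needed so that each $S_j\setminus\bigcup_i\int T_i$ is still a product $F_j'\times[0,1]$; you are in fact more explicit about this point than the paper. One small addition worth making: the paper allows itself to pass to \emph{refinements} of the stratification (in the sense of Remark~(2) after Definition~\ref{def:stratification}) before building the paths and tubes, which is the natural device for guaranteeing the disjointness and the matching of the discs $D_{j,i}$ across interfaces that you flag as the bookkeeping burden.
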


\begin{proof}
We begin with a stratification $S_1,\ldots,S_r$ of $N=\cl(M\setminus(U_0\cup\ldots\cup U_k))$, according to Proposition~\ref{prop:stratification}. Next, let us choose a cell decomposition $C$ of $N$, which respects $S_1,\ldots,S_r$. This means that, for every element $X$ of $\Delta^3C$, there exist a convex polygon $P\subset\R^2$, a homeomorphism $h:P\times[0,1]\rightarrow X$ and a number $i\in\{1,\ldots,r\}$, which satisfy $h(P\times\{0\})\subset L(S_i,\ell_i)$ and $h(P\times\{1\})\subset U(S_i,\ell_i)$, where the maps $\ell_i:S_i\rightarrow F_i\times[0,1]$ have been introduced by Definition~\ref{def:stratification}.

After passing to a refinement $S_1',\ldots,S_s'$ of $S_1,\ldots,S_r$, according to Remark~(2) below Definition~\ref{def:stratification}, and to a cell decomposition $C'$ of $N$, which respects $S_1',\ldots,S_s'$, we can construct pairwise disjoint simple paths $P_1,\ldots,P_k$ in $\Delta^0C'\cup\Delta^1C'$, such that $P_i$ connects $\partial U_0$ to $\partial U_i$.

Finally, we produce a refinement $S_1'',\ldots,S_t''$ of $S_1',\ldots,S_s'$ and a cell decomposition $C''$ of $N$, with subcomplexes $Q_1,\ldots,Q_k$ around the paths $P_1,\ldots,P_k$, and with homeomorphisms $g_i:\D^2\times[0,1]\rightarrow|Q_i|$, which satisfy
\begin{itemize}
\item[(1)]
$\im(g_i)\cap\im(g_j)=\emptyset$ for $i\ne j$
\item[(1)]
$g_i(\D^2\times(0,1))\cap U_j=\emptyset$ for all $j$
\item[(3)]
$g_i(\D^2\times\{0\})\subset\partial U_0$
\item[(4)]
$g_i(\D^2\times\{1\})\subset\partial U_i$
\end{itemize}
$T_i=U_i\cup\im(g_i)$ is a 3-disc in $\cl(M\setminus U_0)$, and $\cl(M\setminus(U_0\cup T_1\cup\ldots\cup T_k))$ a true companion of $M$, as required by Proposition~\ref{prop:true companion}.
\end{proof}

\begin{proposition}\label{prop:disc}
Let $M$ be a homotopy 3-disc, and $C\subset\partial M$ a circle. There exists a splitting 2-disc $D$ in $M$ such that $\partial D=C$.
\end{proposition}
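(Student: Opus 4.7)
The plan is to construct $D$ by pushing a disc in $\partial M$ slightly into the interior of $M$, keeping its boundary fixed on $C$. Since $M$ is a homotopy 3-disc, $\partial M$ is a 2-sphere, and the smooth Jordan--Sch\"onflies theorem applied to $C\subset\partial M$ produces a 2-disc $E\subset\partial M$ with $\partial E=C$ (pick either of the two choices).

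Next I would invoke the smooth collar neighbourhood theorem to obtain an embedding $\nu:\partial M\times[0,1)\to M$ with $\nu(x,0)=x$. Inside $E$, choose a smooth collar $C\times[0,\varepsilon)\hookrightarrow E$ of $C$, and define $f:E\to[0,1)$ by setting $f(c,t)=t$ on this collar and extending $f$ smoothly over $E$ by a strictly positive function. Then let
\[
D=\{\nu(x,f(x)):x\in E\}.
\]
Since $\nu$ is an embedding and $x\mapsto(x,f(x))$ is a smooth embedding $E\hookrightarrow\partial M\times[0,1)$, the set $D$ is a smooth 2-disc; its boundary is $\nu(C\times\{0\})=C$; its interior lies in $\int M$ because $f>0$ off $C$; and at a point of $C$ the tangent space $T_cD$ contains a vector with a nonzero component in the collar direction (this is exactly where the first-order vanishing of $f$ matters), so $T_cD+T_c\partial M=T_cM$ and $D$ is transverse to $\partial M$ along $C$.

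I do not anticipate a real obstacle --- the construction is essentially just a collar push-in. The only point where care is needed is that $f$ vanishes to first order along $C$; a choice like $f(x)=\mathrm{dist}(x,C)^2$ would destroy transversality at $\partial D$. Note that simple connectivity of $M$ is not used here: it enters the story in Proposition~\ref{prop:submanifolds}, where one needs a splitting disc to actually separate $M$ into two pieces, but the mere existence of a splitting disc with prescribed boundary only requires the hypothesis $\partial M\cong\S^2$.
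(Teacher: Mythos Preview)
Your argument is correct and follows essentially the same route as the paper: both observe that $C$ bounds a 2-disc $E$ in the 2-sphere $\partial M$ and then push $E$ into $\int M$ using a collar, keeping $\partial E=C$ fixed. The paper also mentions Dehn's Lemma as an alternative (using simple connectivity) but, like you, prefers the elementary push-in, for which only $\partial M\cong\S^{2}$ is needed; your explicit treatment of the collar map and the first-order vanishing of $f$ to secure transversality is a welcome elaboration of what the paper leaves implicit.
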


\begin{proof}
We can use the fact that $M$ is simply connected, together with Dehn's Lemma, established by C.D.~Papakyriakopoulos \cite{papakyriakopoulos57}, but there exists an easier argument: $\partial M\setminus C$ has two components, both of which are open 2-discs. Choose one of them and push it inside $M$, while keeping its boundary fixed. The result satisfies the requirements of Proposition~\ref{prop:disc}.
\end{proof}

Our main goal here states that, given any homotopy 3-sphere $M$, we find a true companion of $M$ which can be imbedded into the Euclidean space $\R^3$. The following lemma serves as a first step toward this goal.

\begin{lemma}\label{lemma:circle}
Let $S$ be a connected orientable surface of genus $g\ge1$, and assume that $T\subset S$ is homeomorphic to a compact surface in the plane $\R^2$. There exists a circle $C$ in $S\setminus T$, whose homology class in $H_1(S;\Z_2)$ is not zero.
\end{lemma}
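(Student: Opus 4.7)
My plan is to argue by contradiction: suppose every simple closed curve in $S\setminus T$ is null in $H_1(S;\Z_2)$, and derive a contradiction from the planarity of $T$ together with $g\ge1$. Set $T'=\cl(S\setminus T)$; this is a compact 2-submanifold of $S$ with $\partial T'=\partial T$, and $S=T\cup T'$ meeting along these circles.

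First I would show the assumption forces the inclusion $i_*\colon H_1(T';\Z_2)\to H_1(S;\Z_2)$ to vanish: any $\Z_2$-cycle in $T'$ decomposes as a sum of simple closed curves, each of which either already lies in $\int T'=S\setminus T$, or lies on $\partial T$ and can be pushed across a bicollar into $S\setminus T$ without changing its homology class in $S$. In particular every boundary circle of $T$ is null in $H_1(S;\Z_2)$. Because $T$ is planar, its boundary circles generate $H_1(T;\Z_2)$ (every simple closed curve in a sphere-with-holes separates and is therefore homologous mod $2$ to a sum of boundary components), so the inclusion $j_*\colon H_1(T;\Z_2)\to H_1(S;\Z_2)$ also vanishes.

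Next I would plug this into the Mayer--Vietoris sequence of $(T,T')$ with $\Z_2$ coefficients. Since $H_1(T)\oplus H_1(T')\to H_1(S)$ is zero, $H_1(S;\Z_2)$ injects into the kernel of $H_0(\partial T)\to H_0(T)\oplus H_0(T')$, whose dimension is $k-c(T)-c(T')+1$, where $k$ is the number of circles in $\partial T$ and $c(\cdot)$ counts components. Hence $2g\le k-c(T)-c(T')+1$. In parallel, the Euler-characteristic identity $\chi(S)=\chi(T)+\chi(T')$ (using $\chi(\partial T)=0$), combined with $\chi(T)=2c(T)-k$ from planarity and $\chi(T')=2c(T')-2g(T')-k$, yields $k=g+c(T)+c(T')-1-g(T')\le g+c(T)+c(T')-1$. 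Substituting gives $2g\le g$, contradicting $g\ge1$.

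From the contradiction, some simple closed curve in $T'$ represents a non-zero class in $H_1(S;\Z_2)$; a slight push off $\partial T$ places it in $S\setminus T$, providing the circle $C$. The main obstacle, as I see it, is the short but crucial input that planarity of $T$ makes $H_1(T;\Z_2)$ generated by its boundary circles --- it is precisely this step that converts the hypothesis ``every cycle in $S\setminus T$ bounds in $S$'' into the stronger ``every cycle in $T$ bounds in $S$'', without which the Mayer--Vietoris and Euler-characteristic counts would not mesh.
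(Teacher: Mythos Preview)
Your argument is correct and takes a genuinely different route from the paper's. The paper proves the lemma by a double induction on the genus $g$ of $S$ and the number $r$ of boundary circles of $T$: it checks whether some boundary circle $K_i$ of $T$ is already non-null in $H_1(S;\Z_2)$ (then one pushes it off $T$), and otherwise caps off or cuts along a suitable $K_i$ to reduce either $r$ or $g$. Your proof is non-inductive and global: assuming every circle in $S\setminus T$ bounds, you use planarity of $T$ to force both inclusion maps $H_1(T)\to H_1(S)$ and $H_1(T')\to H_1(S)$ to vanish, and then combine Mayer--Vietoris with the Euler-characteristic identity to reach the numerical contradiction $2g\le g$. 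The paper's approach is more elementary and constructive (no Mayer--Vietoris, and one sees where $C$ comes from), while yours is shorter and more conceptual, replacing the case analysis by two standard algebraic counts; the key insight---that planarity of $T$ makes its boundary circles generate $H_1(T;\Z_2)$---is exactly what the paper's induction exploits implicitly.

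One small omission: your computation tacitly assumes $\partial S=\emptyset$ (you use $\partial T'=\partial T$ and, in effect, $\dim H_1(S;\Z_2)=2g$). The paper handles $\partial S\ne\emptyset$ at the end by capping off the boundary circles of $S$ to form a closed $S'$, applying the closed case, and then pushing the resulting circle off the caps. The same reduction grafts onto your argument verbatim: if $[C]\ne0$ in $H_1(S';\Z_2)$ then, since the inclusion $S\hookrightarrow S'$ induces a map carrying homologies to homologies, $[C]\ne0$ in $H_1(S;\Z_2)$ as well.
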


\begin{proof}
We begin with the situation where
\begin{itemize}
\item[(1)]
$\partial S=\emptyset$.
\end{itemize}
The boundary $\partial T$ of $T$ consists of pairwise disjoint circles $K_1,\ldots,K_r$. Let us proceed by double induction, first on $g$, and then on $r$. Denote by $\langle X\rangle\in H_1(S;\Z_2)$ the homology class of the circle $X\subset S$. In the case $(g,r)=(1,1)$, $S$ is a torus and $T\subset S$ a disc, compare \cite{giblin77}. Consider a circle $C\subset S$ with $\langle C\rangle\ne\emptyset$. If $C$ meets $T$, we easily find, by pushing it out of $T$, a circle $C'$, homologuous to $C$ and disjoint to $T$.

For the inductive step, let us begin with the assumption that
\begin{itemize}
\item[(2)]
$\langle K_i\rangle\ne\emptyset$, for some $i\in\{1,\ldots,r\}$.
\end{itemize}
Again, we can push $K_i$ away from $T$, and obtain the required circle. If (2) does not hold, there exist 2-chains $V_i\subset S$ which satisfy $\partial V_i=K_i$, always. In the case $r=1$, the surface $T$ is a disc, and we proceed as we did in the case $(g,r)=(1,1)$.

Now we assume that $r\ge2$, and begin with the situation where
\begin{itemize}
\item[(3)]
$V_i\subset T$, for some $i\in\{1,\ldots,r\}$.
\end{itemize}
Notice that $V_i$ is a planar surface with a single boundary component, and hence a disc, while $T'=T\setminus V_i$ has fewer boundary circles than $T$. The inductive assumption gives us a circle $C\subset S\setminus T'$ with $\langle C\rangle\ne0$, and we can again push $C$ out of $V_i$ in order to obtain the desired result.

Finally, if
\begin{itemize}
\item[(4)]
$V_i\not\subset T$, for every $i\in\{1,\ldots,r\}$,
\end{itemize}
let us look at the closed orientable surface $W_i$, which arises from $S$ by attaching a disc along $K_i$ \cite{kosinski93}. Since $V_i'=\cl(S\setminus V_i)$ is another 2-chain in $S$ with $\partial V_i'=K_i$, we may assume that the genus $g_j$ of $W_j$ is smaller than $g$, for at least one number $j\in\{1,\ldots,r\}$. If $g_j=0$, then $T'=T\cup V_j$ is still a bounded planar surface, and it has fewer than $r$ components. Hence, by the inductive assumption, we obtain a circle $C\subset S\setminus T'\subset S\setminus T$ with $\langle C\rangle\ne0$, and our Lemma follows. In the case $1\le g_j\le g-1$, the inductive assumption produces a circle $C\subset V_j\setminus T$ such that $\langle C\rangle\ne0$ in $H_1(W_j;\Z_2)$. Consequently, $\langle C\rangle\ne0$ in $H_1(S;\Z_2)$. Our proof is complete, under the assumption (1).

If $\partial S$ is not empty, we construct a closed orientable surface $S'$, which arises from $S$ by attaching a 2-disc $D(K)$ along every boundary circle $K$ of $S$, and find a circle $C'$ in $S'\setminus T$, whose homology class in $H_1(S';\Z_2)$ is not zero. As in the beginning of this proof, there arises a circle $C$, homologuous to $C'$ in $S'$, which is still disjoint to $T$, but also disjoint to every disk $D(K)$. Lemma~\ref{lemma:circle} follows.
\end{proof}

\begin{proposition}\label{prop:true companion planar stratification}
With the terminology of Definition~\ref{def:true companion} and Proposition~\ref{prop:true companion}, there exist a true companion $H$ of $M$ with a planar stratification.
\end{proposition}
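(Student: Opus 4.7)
The plan is to induct on the \emph{total page genus} $\Gamma(\sigma) = \sum_{i=1}^{r} g(F_i)$ attached to a stratification $\sigma=(S_1,\ldots,S_r)$ of a true companion, where $g(F_i)$ is the genus of the orientable surface indexing the stratum $S_i\cong F_i\times[0,1]$. In the base case $\Gamma(\sigma)=0$, every $F_i$ is planar, so $\sigma$ is planar in the sense of the Remarks following Definition~\ref{def:stratification}. Starting from the true companion $H$ and stratification $\sigma$ produced by Proposition~\ref{prop:true companion}, it therefore suffices to show that whenever $\Gamma(\sigma)>0$, a finite sequence of refinements and homotopy operations (in the sense of Definition~\ref{def:homotopy operation}) transforms $H$ into a true companion $H'$ whose stratification $\sigma'$ satisfies $\Gamma(\sigma')<\Gamma(\sigma)$.

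For the inductive step I pick an index $i$ with $g(F_i)\ge 1$. After a preliminary refinement of $\sigma$, I may assume that each attachment locus $S_i\cap S_{i\pm 1}$, transported through $\ell_i$ to a subsurface $T_\pm$ of $F_i$, is a disjoint union of 2-discs and hence planar. Lemma~\ref{lemma:circle}, applied to $F_i$ and $T=T_-\cup T_+$, then furnishes a circle $C\subset F_i\setminus T$ whose class in $H_1(F_i;\Z_2)$ is non-zero. Since on an orientable surface a simple closed curve is separating exactly when it is null-homologous, $C$ is non-separating, so cutting $F_i$ along $C$ drops its genus by one. Lifting $C$ through $\ell_i$ to a fibre and pushing it off $T_\pm$ to the portion of $U(S_i,\ell_i)\cup L(S_i,\ell_i)$ lying in $\partial H$ yields a circle $C^*\subset\partial H$, and Proposition~\ref{prop:disc} supplies a splitting disc $D\subset H$ with $\partial D=C^*$.

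Proposition~\ref{prop:submanifolds} splits $H$ along $D$ into submanifolds $U$ and $V$, each bounded by a 2-sphere. I then build a 3-disc $E\subset H$ as a thin regular neighborhood of $D$ together with a 2-disc collar in $\partial H$ on one side of $C^*$, arranged so that $\partial E\cap\partial H$ is a single 2-disc. Removing $E$ is then a reduction in the sense of Definition~\ref{def:homotopy operation}, and Proposition~\ref{prop:homotopy operation} guarantees that $H'=\cl(H\setminus E)$ is still a homotopy 3-disc. I re-stratify $H'$ by keeping every $S_j$ with $j\ne i$ intact and replacing $S_i$ with strata whose pages are the components of $F_i$ cut along $C$. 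Because $C$ is non-separating and disjoint from $T_\pm$, this lowers $g(F_i)$ by one and leaves every other page untouched, so $\Gamma(\sigma')=\Gamma(\sigma)-1$. Disjointness of $H'$ from $U_1,\ldots,U_k$ is preserved because the surgery happens entirely inside $H$.

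The hardest step is the one in the preceding paragraph: organizing the 3-disc $E$ so that after its removal the cut-open pages of $F_i$ actually assemble into valid strata of $H'$, with product structures matching those of the unaltered neighbors $S_{i\pm 1}$ across the former attachment loci. Turning the schematic surgery along $D$ into a bona fide stratification---reconciling the stratification axioms of Definition~\ref{def:stratification}, the non-separating cut along $C$, and the choice of 2-disc collar for $E$---is the technical crux of the argument.
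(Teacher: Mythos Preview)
There is a genuine gap at the heart of your surgery step, and it is not just a matter of technical bookkeeping.

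By Proposition~\ref{prop:submanifolds} the splitting disc $D$ \emph{separates} $H$ into two pieces $U$ and $V$. Consequently a thin regular neighbourhood $N(D)\cong D\times[-\epsilon,\epsilon]$ disconnects $H$, while any reduction in the sense of Definition~\ref{def:homotopy operation} is a homeomorphism (Proposition~\ref{prop:homotopy operation}) and in particular keeps $H'$ connected. Hence a $3$-disc $E$ with $N(D)\subset E$ and $\partial E\cap\partial H$ a single $2$-disc must swallow one entire side, say $U$. But $U$ is so far only a homotopy $3$-disc, not a known $3$-disc; assuming $E$ is a $3$-disc here is tantamount to assuming the result you are trying to prove. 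The phrase ``together with a $2$-disc collar in $\partial H$ on one side of $C^{*}$'' does not rescue this: adding a thin collar to the annulus $\partial N(D)\cap\partial H$ still leaves an annulus, and capping it by an honest $2$-disc in $\partial H$ forces $E\supset U$ as above.

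Two further points compound the difficulty. First, since $[C]\neq 0$ in $H_{1}(F_{i};\Z_{2})$, the circle $C^{*}$ cannot bound a disc inside $S_{i}\cong F_{i}\times[0,1]$; hence $D$ is forced to leave $S_{i}$ and meet other strata, so the promise ``keeping every $S_{j}$ with $j\neq i$ intact'' cannot be met without further control on $D$ that you do not provide. Second, refinement in the sense of Remark~(2) after Definition~\ref{def:stratification} preserves the linear order of strata and only inserts horizontal slices; it cannot turn an arbitrary attachment locus $T_{\pm}\subset F_{i}$ into a disjoint union of $2$-discs, so the hypothesis you feed into Lemma~\ref{lemma:circle} is not obtained.

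For comparison, the paper does not attempt a genus-lowering reduction at all. It first performs \emph{extensions} to force the ``inverted pyramid'' condition $U(S_{i},\ell_{i})\subset L(S_{i+1},\ell_{i+1})$ for $i<r$, where $r$ is the least non-planar level, and uses this to push the splitting disc $D$ off $S_{1},\ldots,S_{r-1}$. Then Proposition~\ref{prop:submanifolds} is used not to cut, but to exhibit $U\cap L(S_{r},\ell_{r})$ as a $2$-chain in $L(S_{r},\ell_{r})$ with boundary $C$, contradicting the choice of $C$ from Lemma~\ref{lemma:circle}. The upshot is that \emph{after the extensions no non-planar level can exist}; no surgery on $D$ is ever performed.
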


Remember Remark~(1) below Definition~\ref{def:stratification}.

\begin{proof}
According to Proposition~\ref{prop:true companion}, there exists a true companion $H$ of $M$. We consider a stratification $S_1,\ldots,S_s$ of $H$, with homeomorphisms $\ell_i:S_i\rightarrow F_i\times[0,1]$, as described in Definition~\ref{def:stratification}. Remember that every $F_i$ is an orientable 2-manifold in $\R^3$, and that the equalities $S_i\cap S_{i+1}=U(S_i,\ell_i)\cap L(S_{i+1},\ell_{i+1})$, and $S_i\cap S_j=\emptyset$ in the case $|i-j|\ge2$, always hold.

Now comes what we may call the core of this essay.
\begin{itemize}
\item[(1)]
After some homotopy operations, $H$ will be transformed into a homotopy disc $H'$ which allows a planar stratification $S_1',\ldots,S_t'$. The corresponding homeomorphisms, in the sense of Definition~\ref{def:stratification} shall be denoted by $\ell_i':S_i'\rightarrow F_i'\times[0,1]$. \end{itemize}
Since $L(S_1,\ell_1)$ is a proper subspace of the 2-sphere $\partial H$, we can certainly imbed $F_1$ into the plane $\R^2$. Hence, if (1) does not already hold for $S_1,\ldots,S_s$, we find a number $r\le s$ such that $F_r$ is non-planar, whereas every $F_i$ with $i\le r-1$ is planar.
\begin{itemize}
\item[(2)]
After some extensions, applied to $H$, we may assume that the inclusion $U(S_i,\ell_i)\subset L(S_{i+1},\ell_{i+1})$ holds, for every $i\in\{1,\ldots,r-1\}$.
\end{itemize}
In simpler words, we can say that $S_1\cup\ldots\cup S_r$ looks like an inverted pyramid. If (2) is not correct for $H$ itself, let us write $j=\min\{i\in\{1,\ldots,r-1\}:\ U(S_i,\ell_i)\not\subset L(S_{i+1},\ell_{i+1})\}$, and begin with the case where
\begin{itemize}
\item[(3)]
some component $P$ of $\cl(U(S_j,\ell_j)\setminus L(S_{j+1},\ell_{j+1}))$ is a disc.
\end{itemize}
Above each $X\subset L(S_{j+1},\ell_{j+1})$ stands the tower $T(X)=\ell_{j+1}^{-1}(\ell_{j+1}(X)\times[0,1])$. If $\partial P\cap S_{i+1}\ne\emptyset$, we denote by $F_1,\ldots,F_q$ the components of $\partial P\cap S_{j+1}$, which are all 1-dimensional. Now we attach a 3-cell $W$ to $H$, such that $W\cap H$ is the disc $P\cup T(F_1)\cup\ldots\cup T(F_q)\subset\partial W\cap\partial H$, and extend $S_1,\ldots,S_s$ to a stratification of $H\cup W$.

Next, look at the situation where
\begin{itemize}
\item[(4)]
the assumption (3) does not hold,
\end{itemize}
and choose a component $P$ of $\cl(U(S_j,\ell_j)\setminus L(S_{j+1},\ell_{j+1}))$, together with a 2-disc $D\subset P$, which meets two different components, $A$ and $B$, of $\partial P$, each of them in a line segment, and satisfies $D\setminus(A\cup B)\subset\int P$. Let us also require that $D\cap A$, as well as $D\cap B$, either belong to $L(S_{j+1},\ell_{j+1})$ or are disjoint to it. We associate to each $X\in\{A,B\}$ the tower $T(X)$, as described below (3), if $X$ belongs to $L(S_{j+1},\ell_{j+1})$, and the space $T(X)=X\cap D$, otherwise. Now we attach a 3-cell $W$ to $H$, such that $W\cap H=\partial W\cap\partial H=D\cup T(A)\cup T(B)$, and again extend $S_1,\ldots,S_s$ to a stratification of $H\cup W$.

After a finite number of steps, as described below (3) and (4), we arrive at a situation where (2) is satisfied. Remember, however, that an extension might transform a planar surface $F_i$ into a non-planar surface. This could decrease the number $r$, as described above (2). But, with this in mind, we still have confirmed (2).

Now, Lemma~\ref{lemma:circle} says, that there exists a circle $C$ in $L(S_r,\ell_r)\setminus U(S_{r-1},\ell_{r-1})$, whose homology class in $H_1(L(S_r,\ell_r))$ is not zero. Proposition~\ref{prop:disc} produces a splitting 2-disc $D$ in $H$, bounded by $C$.
\begin{itemize}
\item[(5)]
$D$ can be chosen in such a way that $D\cap S_i=\emptyset$ for every $i\le r-1$.
\end{itemize}
Let us begin with any splitting 2-disc $D$ in $H$ with $\partial D=C$, and write $j=\min\{i\in\{1,\ldots,s\}:\ D\cap S_i\ne\emptyset\}$. Assuming that $j\le r-1$, we know from (2) that $S_j\cap S_{j+1}=U(S_j,\ell_j)$. Hence we can consider the space $N_\delta=S_j\cup\ell_{j+1}^{-1}(\ell_{j+1}(U(S_j,\ell_j))\times[0,\delta])$, for some $\delta\in(0,1)$. There exists a diffeomorphism $f:N_\delta\rightarrow N_\delta$, which leaves the boundary of $N_\delta$ pointwise fixed and carries $D\cap N_\delta$ onto a subset of $N_\delta\setminus S_j$. If we replace $D$ by $(D\setminus N_\delta)\cup f(D\cap  N_\delta)$, we increase the number $j$. After a finite iteration of this process, we obtain a disc $D$ which satisfies (5).

According to Proposition~\ref{prop:submanifolds}, there exists a pair $U$ and $V\ne U$ of 3-manifolds in $H$, such that $U\cup V=H$ and $U\cap V=D$. $U\cap L(S_r,\ell_r)$ is a 2-chain in $L(S_r,\ell_r)$, whose boundary coincides with $C$, contrary to Lemma~\ref{lemma:circle}. The statement (1), and with it Proposition~\ref{prop:true companion planar stratification}, are established.
\end{proof}

\begin{proposition}\label{prop:imbedding}
Consider a homotopy 3-disc $H$. If there exists a planar stratification of $H$, then it can be imbedded into the Euclidean space $\R^3$.
\end{proposition}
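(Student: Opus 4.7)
The plan is to build $\Phi\colon H\hookrightarrow\R^3=\R^2\times\R$ inductively along the strata, placing each $S_i\cong F_i\times[0,1]$ as a horizontal slab at heights $[c_{i-1},c_i]$ for a chosen sequence $c_0<c_1<\cdots<c_r$. Concretely, I would aim for
\[
\Phi\bigl(\ell_i^{-1}(f,t)\bigr)\;=\;\bigl(\iota_i(f),\,(1-t)c_{i-1}+t\,c_i\bigr),
\]
where $\iota_i\colon F_i\hookrightarrow\R^2$ is a planar embedding to be chosen stratum by stratum. For $\Phi$ to be well defined on the overlap $S_i\cap S_{i+1}$ and injective at the interface plane $z=c_i$, the embeddings $\iota_i$ and $\iota_{i+1}$ must agree on the common subsurface $A_i=S_i\cap S_{i+1}$ (after identification via $\ell_i$ and $\ell_{i+1}$), and they must satisfy $\iota_i(F_i)\cap\iota_{i+1}(F_{i+1})=\iota_i(A_i)$.

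Starting from $\iota_1$ as the given planar inclusion of $F_1$, the inductive step takes the planar embedding of $F_{k+1}$ supplied by the hypothesis and adjusts it by an ambient self-homeomorphism of $\R^2$ so that it coincides with $\iota_k$ on $A_k$. This adjustment is available because any two planar embeddings of a compact $2$-manifold with boundary are related by an ambient homeomorphism of the plane (classification of planar surfaces, compare \cite{munkres99}); the adjustment is then applied to the rest of $F_{k+1}$ without disturbing the already-placed slabs, since only the top face of $S_k$ and the bottom face of $S_{k+1}$ interact.

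The main technical obstacle I expect is the injectivity condition at the interface plane $z=c_k$: the portions of $\iota_k(F_k)$ and $\iota_{k+1}(F_{k+1})$ that extend beyond $\iota_k(A_k)$ could protrude into the same component of $\R^2\setminus\iota_k(A_k)$ and produce spurious coincidences. I plan to circumvent this by first refining the stratification (Remark~(2) below Definition~\ref{def:stratification}) so that any problematic slab is subdivided into thinner sub-slabs whose horizontal extensions can be routed into distinct components of the complement, using the fact that $H$ is a homotopy $3$-disc bounded by a $2$-sphere to control the combinatorics of how consecutive planar faces meet. Once compatible $\iota_i$'s have been built, $\Phi$ is a continuous injection of the compact manifold $H$ into $\R^3$, hence the desired imbedding.
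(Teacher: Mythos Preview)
Your overall scheme --- place each stratum as a horizontal slab and build the map inductively --- is exactly the paper's. The gap is in how you force compatibility at the interfaces. You propose to move the given planar inclusion of $F_{k+1}$ by an ambient self-homeomorphism of $\R^2$ until it agrees with $\iota_k$ on $A_k$, citing the claim that any two planar embeddings of a compact $2$-manifold are ambiently equivalent in $\R^2$. That claim is false once the surface is disconnected: an annulus together with a disjoint disc can sit in the plane with the disc inside the hole of the annulus or outside it, and no self-homeomorphism of $\R^2$ exchanges these two pictures, since their complementary regions are not homeomorphic. In a stratification the surfaces $F_k$ and the overlaps $A_k$ are typically disconnected, so this step does not go through. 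Your fallback --- refine and ``route'' the surplus pieces of $\iota_k(F_k)$ and $\iota_{k+1}(F_{k+1})$ into distinct complementary regions --- is precisely where the real argument would have to live, and as written it is only a hope; nothing in the homotopy-disc hypothesis directly tells you which regions are available or why refinement removes the collisions.

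The paper avoids this matching problem altogether. Instead of reconciling two independent planar embeddings on $A_k$, it \emph{extends} the already-constructed planar image $G_k\times\{k\}$ of $U(S_k,\ell_k)$ to a planar embedding $h$ of $L(S_{k+1},\ell_{k+1})$ whose image $L$ \emph{contains} $G_k\times\{k\}$, and then takes $G_{k+1}\times\{k\}=L$. The containment $G_k\subset G_{k+1}$ makes injectivity at height $k$ automatic, with no routing needed. Note that this extension step tacitly uses $U(S_k,\ell_k)\subset L(S_{k+1},\ell_{k+1})$, the ``inverted pyramid'' arrangement, which the paper has secured beforehand via the extensions in step~(2) of the proof of Proposition~\ref{prop:true companion planar stratification}; if you want a self-contained argument for Proposition~\ref{prop:imbedding} as stated, you would first reduce to that case.
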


\begin{proof}
According to Definition~\ref{def:stratification} and Remark~(1) below it, we can decompose $H$ into submanifolds $S_1,\ldots,S_r$, such that there exist homeomorphisms $\ell_i:S_i\rightarrow F_i\times[0,1]$, $1\le i\le r$, where $F_i$ is a 2-dimensional submanifold of the plane $\R^2$, and where the equalities $S_i\cap S_{i+1}=U(S_i,\ell_i)\cap L(S_{i+1},\ell_{i+1})$, and $S_i\cap S_j=\emptyset$ in the case $|i-j|\ge2$, always hold. We construct a sequence of spaces $M_i=G_i\times[i-1,i]$, $1\le i\le r$, where $G_i$ is a compact 2-manifold in $\R^2$, together with a sequence of homeomorphisms $g_i:S_1\cup\ldots\cup S_i\rightarrow M_1\cup\ldots\cup M_i$, which satisfy $g_i(S_i)=M_i$, always.

Let us proceed by induction on $i$, and set $M_1=F_1\times[0,1]$, $g_1=\ell_1$. Having found $M_i$ and $g_i$, we extend $g_i|U(S_i,\ell_i)$ to a homeomorphism $h$ between $L(S_{i+1},\ell_{i+1})$ and some space $L\subset\R^2\times\{i\}$, containing $G_i\times\{i\}$. Now we define the map $g_{i+1}$ by setting $g_{i+1}(x)=g_i(x)$ for $x\in S_1\cup\ldots\cup S_i$ and $g_{i+1}(\ell_{i+1}^{-1}(x,\tau))=(h(\ell_{i+1}^{-1}(x,0)),i+\tau)$ for $(x,\tau)\in F_{i+1}\times[0,1]$. The homeomorphism $g_r$ imbeds $H$ into the space $\R^3$, as promised.
\end{proof}

\begin{theorem}
Every 3-dimensional homotopy sphere $M$ is homeomorphic to the standard sphere $\S^3$.
\end{theorem}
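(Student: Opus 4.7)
The plan is to assemble everything built up in the previous sections into the following pipeline: Morse function on $M$ $\to$ canonical charts at every critical point $\to$ true companion $H$ with planar stratification $\to$ embedding of $H$ into $\R^3$ $\to$ conclude via the generalized Sch\"onflies theorem that $H$ is a disc $\to$ pull back through the homotopy operations to conclude $M$ is a sphere.

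First I would invoke the smoothness statement from Section~2.1 to give $M$ a $C^1$ atlas and pick on it a climbing Morse function $\varphi:M\to\R$ with chain of critical points $(p_0,\ldots,p_k)$. Choose positive numbers $\delta_0,\ldots,\delta_k$ separating the critical values, and pick canonical charts $\alpha_i=(U_i,h_i)$ as in Definition~\ref{def:canonical chart}; note that because $\varphi$ is climbing, $p_0$ has index $0$, so $U_0$ is literally a standard $3$-disc by part~(3) of that definition. Proposition~\ref{prop:true companion planar stratification} then supplies a true companion $H$ of $M$ that admits a planar stratification, and Proposition~\ref{prop:imbedding} embeds this $H$ as a compact submanifold of $\R^3$.

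Next, since $H$ is a homotopy $3$-disc, its boundary $\partial H$ is a $2$-sphere, and its image under the embedding is a topological $2$-sphere in $\R^3\subset\S^3$. By the generalized Sch\"onflies theorem (quoted already in the paper in the discussion of canonical charts), this $2$-sphere bounds two closed $3$-discs in $\S^3$, and the embedded $H$ must be one of them. Hence $H$ is homeomorphic to $\D^3$. By Definition~\ref{def:true companion}, $H$ arises from $M\setminus\int U_0$ by a finite sequence of reductions and extensions; reversing this sequence (trading reductions for extensions and vice versa) exhibits $M\setminus\int U_0$ as arising from the $3$-disc $H$ by a homotopy operation, so Proposition~\ref{prop:homotopy operation}(2) forces $M\setminus\int U_0$ itself to be a $3$-disc. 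Finally, $M=U_0\cup\cl(M\setminus\int U_0)$ presents $M$ as the union of two $3$-discs meeting exactly along their common boundary $2$-sphere $\partial U_0$, and Proposition~\ref{prop:sphere} yields $M\cong\S^3$.

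The routine verification is the pipeline gluing itself; the substantive work has all been done in Propositions~\ref{prop:true companion planar stratification} and~\ref{prop:imbedding}. The one place where I would take a little care is making sure the inverse of the homotopy operations given by Definition~\ref{def:true companion} really falls under Proposition~\ref{prop:homotopy operation}(2), i.e.\ that ``extension'' and ``reduction'' are symmetric in the sense needed to propagate disc-ness \emph{backward} from $H$ to $M\setminus\int U_0$. This is immediate from the definition, where $V=\cl(M\setminus D)$ arises from $M$ by a reduction exactly when $M$ arises from $V$ by an extension, so the reverse sequence is again a legitimate sequence of homotopy operations and Proposition~\ref{prop:homotopy operation}(2) applies without further ado.
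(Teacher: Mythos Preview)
Your proposal is correct and follows essentially the same route as the paper: produce a true companion $H$ with a planar stratification (Proposition~\ref{prop:true companion planar stratification}), embed it in $\R^3$ (Proposition~\ref{prop:imbedding}), invoke a Sch\"onflies-type result to conclude $H\cong\D^3$, and then pass back to $M$ via Propositions~\ref{prop:homotopy operation}(2) and~\ref{prop:sphere}. The paper's version is terser---it leaves the reversal of the homotopy operations implicit and cites the piecewise linear Sch\"onflies theorem for $\R^3$ rather than the generalized one---but the argument is the same.
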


\begin{proof}
According to Proposition~\ref{prop:sphere} and Definition~\ref{def:true companion}, it is good enough to find a true companion of $M$, which is homeomorphic to the disc $\D^3$. Propositions~\ref{prop:true companion planar stratification} and \ref{prop:imbedding} produce a true companion $H$ of $M$ together with an imbedding $f:H\rightarrow\R^3$. According to the piecewise linear Sch\"onflies theorem for $\R^3$ \cite{bing83}, \cite{graub50}, there exists a piecewise linear homeomorphism $g:\R^3\rightarrow\R^3$, which carries the boundary of $f(T)$ onto the boundary of the standard simplex in $\R^3$. The composition $g\circ f$ verifies the claim of our Theorem.
\end{proof}

\pagebreak

\bibliographystyle{plain}
\bibliography{Poincare}

\end{document}